\newcommand\Ex{{\mathbb E}}
\newcommand\W{{\mathbb W}}
\newcommand\Prob{{\mathbb P}}
\newcommand\X{{\mathbb X}}
\newcommand\Y{{\mathbb Y}}
\newcommand\cC{{\mathcal C}}
\newcommand\cP{{\mathcal P}}
\newcommand\N{{\mathbb N}}
\newcommand\R{{\mathbb R}}
\newcommand\C{{\mathbb C}}
\newcommand\x{{\mathbf x}}
\newcommand\y{{\mathbf y}}
\newcommand\norm[1]{\|#1\|}
\newcommand\bra[1]{\langle #1 \rangle}
\newtheorem{theorem}{Theorem}[section]
\newtheorem{lemma}[theorem]{Lemma}
\theoremstyle{definition}
\newtheorem{definition}[theorem]{Definition}
\theoremstyle{remark}
\newtheorem{remark}[theorem]{Remark}
\title{Beta Laguerre processes in a high temperature regime}
\author{Hoang Dung Trinh\footnote{Faculty of Mathematics Mechanics Informatics, University of Science, Vietnam National University, Hanoi, Vietnam.
\newline Email: thdung.hus@gmail.com} 
\and
Khanh Duy Trinh\footnote{Global Center for Science and Engineering, Waseda University, Japan.
\newline
Email: trinh@aoni.waseda.jp 
} 
}
\begin{document}
\maketitle

\begin{abstract}

Beta Laguerre processes which are generalizations of the eigenvalue process of Wishart/Laguerre processes can be defined as the squares of radial Dunkl processes of type B. In this paper, we study the limiting behavior of their empirical measure processes.
By the moment method, we show the convergence to a limit  in a high temperature regime, a regime where $\beta N \to const \in (0, \infty)$, where $\beta$ is the inverse temperature parameter and $N$ is the system size. This is a dynamic version of a recent result on the convergence of the empirical measures of beta Laguerre ensembles in the same regime.

\medskip

	\noindent{\bf Keywords:}  beta Laguerre processes ; radial Dunkl processes ; beta Laguerre ensembles ; high temperature regime ; the moment method ; 
		
\medskip
	
	\noindent{\bf AMS Subject Classification: } Primary 60K35; Secondary 60F05,  60H05
	
%
%
\end{abstract}

\section{Introduction}
The so-called beta Laguerre processes (of Ornstein--Uhlenbeck type) solve the following system of stochastic differential equations (SDEs)
\begin{equation}
\begin{cases}
	d\lambda_i(t)=\sqrt{2\lambda_i}  d  b_i(t) - \lambda_i(t)  d t  + \alpha  d  t + \dfrac{\beta}2 \sum\limits_{j : j \neq i} \dfrac{2\lambda_i(t)}{\lambda_i(t) - \lambda_j(t)} dt,\\
	\lambda_i(0) = \lambda_0^{(N,i)},
\end{cases}
 i = 1, \dots, N,
\label{SDE-lambda-intro}
\end{equation}
where $\{b_i(t)\}_{i = 1, \dots, N}$ are independent standard Brownian motions, $\alpha, \beta > 0$ are parameters, and $0 \le \lambda_0^{(N,1)}  \le \cdots \le \lambda_0^{(N,N)}$ are initial data. Without the drift term $-\lambda_i dt$, SDEs of the above form  generalize the eigenvalue process of Wishart processes $(\beta = 1)$ \cite{Bru-1989, Bru-1991} and Laguerre processes $(\beta = 2)$ \cite{Demni-2007, Katori-Tanemura-2004, Konig-OConnell-2001}, and hence the name. When $\beta \ge 1$ and $\alpha > 0$, the system of SDEs~\eqref{SDE-lambda-intro} has a unique strong solution with no collisions (cf.\ \cite[\S 6.4]{Graczyk-Jacek-2014}). There has not been any direct approach to study the case $\beta \in (0, 1)$ yet. However, as observed in \cite{Demni-2007-arxiv}, beta Laguerre processes can be defined as  the squares of radial Dunkl processes of type B, extending the range of parameters in their definition to $\beta > 0$ and $\alpha > 1/2$.

Starting from any initial condition $\{\lambda_0^{(N,i)}\}_{1\le i \le N}$, as $t \to \infty$, the joint distribution of  $\{\lambda_i (t)\}_{1 \le i \le N}$ converges weakly to a distribution with joint density 
\begin{equation}\label{BLE}
	\frac{1}{Z_{N, \alpha, \beta}}\prod_{i < j} |\lambda_j - \lambda_i|^\beta \prod_{l = 1}^N \lambda_l^{\alpha - 1} e^{- \lambda_l}, \quad (0 \le \lambda_1 \le  \dots \le \lambda_N),
\end{equation}
with $Z_{N, \alpha, \beta}$ the normalizing constant, which belongs to a family of beta Laguerre ensembles ($\beta$LEs for short) in random matrix theory. That fact follows from  the explicit joint density of radial Dunkl processes of type B (see Sect.\ 2).  The ensembles which generalize the eigenvalue distribution of Wishart matrices $(\beta = 1)$ and Laguerre matrices $(\beta = 2)$  can be realized as the eigenvalues of a random tridiagonal matrix~\cite{DE02}. Furthermore, by expressing the joint density in the form,
\[
	const \times \exp \bigg(- \beta \Big( \frac{1}{2} \sum_{i \neq j} -  \log |\lambda_j - \lambda_i| +  \sum_{l = 1}^N  V_N(\lambda_l) \Big) \bigg),
\]
$\beta$LEs are viewed as the equilibrium measure of a Coulomb log-gas on $[0, \infty)$ under a potential $V_N$ at the inverse temperature $\beta$.

For $\beta$LEs, the limiting behavior of eigenvalues has been well studied.
For fixed $\beta$, the empirical distribution of the eigenvalues under a suitable scaling  converges weakly to the Marchenko--Pastur distribution as $N\to \infty$, almost surely \cite{DE06}, which is a natural extension of the well-known Marchenko--Pastur law for Wishart and Laguerre matrices. Readers who are interested in Wishart and Laguerre matrices are referred to a monograph \cite{Pastur-book}. We note here that for such limiting behavior in case $\beta$ is fixed, the parameter $\alpha$ varies as a function of $N$ (and of $\beta$ as well), determining the parameter of the Marchenko--Pastur distribution. The Marchenko--Pastur law still holds when $\beta = \beta(N)$ varies as long as $\beta N \to \infty$ \cite{Trinh-Trinh-2021}. However, in the so-called high temperature regime, $\beta N \to 2c \in (0, \infty)$,  with probability one, the empirical distribution of the eigenvalues~\eqref{BLE}, without scaling and with fixed $\alpha$, converges weakly to a limiting probability measure $\nu_{\alpha, c}$ related to associated Laguerre polynomials \cite{Trinh-Trinh-2021} (see also \cite{Allez-Wishart-2013}).

The aim of this paper is to study the limiting behavior of the empirical measure process
\begin{equation}\label{empirical}
	\mu^{(N)}_t = \frac1N \sum_{i = 1}^N \delta_{\lambda_i(t)} 
\end{equation}
of the beta Laguerre process~\eqref{SDE-lambda-intro}
in a high temperature regime where $\beta N \to 2c \in (0, \infty)$, with $c>0$ and $\alpha > 1/2$ being fixed. Here $\delta_\lambda$, for $\lambda \in \R$, denotes the Dirac measure. Note that the case where $\beta$ is fixed was studied in \cite{Duvillard-Guionnet-2001} in which a dynamic version of the Marchenko--Pastur law was established. A method to deal with this kind of problems has been well developed \cite{Duvillard-Guionnet-2001, Cepa-Lepingle-1997, Rogers-Shi-1993}. By imitating arguments from those works, we can immediately derive the following result.

\begin{theorem}\label{thm:intro}
Assume that the initial measure $\mu_0^{(N)}$ converges weakly to a probability measure $\mu_0$ and satisfies  
\begin{equation}\label{log}
	\sup_N \int \log(1+x) d\mu_0^{(N)} < \infty.
\end{equation}
Then for any $T>0$, the sequence $(\mu_t^{(N)})_{0 \le t \le T}$ is tight in the space $\cC([0, T], \cP(\R_{\ge 0}))$ and any subsequential limit is supported on the set of continuous probability measure-valued processes $(\mu_t)_{0 \le t \le T}$ satisfying the integro-differential equation
\begin{align}
	\bra{\mu_t, f} &=  \bra{\mu_0, f} + \int_0^t \bra{\mu_s, \alpha f' - xf' + x f''} ds \notag\\
	&\quad + c \int_0^t \left( \iint \frac{xf'(x) - yf'(y)} {x - y} d\mu_s(x) d\mu_s(y)  \right) ds, \quad t \in [0, T], \label{equation-mu-intro}
\end{align}
for all $f \in C_b^2 = \{f \colon [0, \infty) \to \R : f, f', f'' \text{ bounded}\}$ with $xf', xf''$ bounded.
Here $\cC([0, T], \cP(\R_{\ge 0}))$ is the space of continuous mappings from $[0,T]$ to the space $\cP(\R_{\ge 0})$ of probability measures on $\R_{\ge 0} = [0, \infty)$ endowed with the uniform topology, and $\bra{\mu, f} = \int f d\mu$ for a measure $\mu$ and an integrable function $f$.
\end{theorem}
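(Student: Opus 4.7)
The plan is to follow the semimartingale strategy developed in \cite{Rogers-Shi-1993, Cepa-Lepingle-1997, Duvillard-Guionnet-2001}, adapted to the scaling $\beta N \to 2c$, in which the Vandermonde coefficient $\beta/2$ is already of the right order $1/N$ to produce a finite contribution in the limit. The three main steps will be: (i) derive a semimartingale decomposition for $\bra{\mu_t^{(N)}, f}$, (ii) establish tightness, and (iii) identify subsequential limits.

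First, I apply It\^o's formula to $f(\lambda_i(t))$ for a test function $f \in C_b^2$ with $xf'$ and $xf''$ bounded, sum over $i$, and divide by $N$. Using the symmetrization
\[
\sum_{i \ne j} \frac{\lambda_i f'(\lambda_i)}{\lambda_i - \lambda_j} = \frac{1}{2} \sum_{i \ne j} \frac{\lambda_i f'(\lambda_i) - \lambda_j f'(\lambda_j)}{\lambda_i - \lambda_j},
\]
I obtain the decomposition
\begin{align*}
\bra{\mu_t^{(N)}, f} &= \bra{\mu_0^{(N)}, f} + M_t^{(N,f)} + \int_0^t \bra{\mu_s^{(N)}, \alpha f' - x f' + x f''}\, ds \\
&\quad + \frac{\beta N}{2} \int_0^t \iint \phi_f(x,y)\, d\mu_s^{(N)}(x)\, d\mu_s^{(N)}(y)\, ds + O(1/N),
\end{align*}
where $\phi_f(x, y) := (xf'(x) - yf'(y))/(x-y)$, extended to the diagonal by $\phi_f(x,x) := f'(x) + xf''(x)$, is a bounded continuous function on $\R_{\ge 0}^2$ by the hypotheses on $f$. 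The martingale $M^{(N,f)}$ has quadratic variation $\frac{2}{N^2}\int_0^t \sum_i \lambda_i(s) f'(\lambda_i(s))^2\, ds$, which is $O(1/N)$ since $x f'(x)^2 = (xf'(x))\cdot f'(x)$ is bounded; Doob's inequality then gives $\sup_{t \le T}|M_t^{(N,f)}| \to 0$ in $L^2$.

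Second, I prove tightness of $(\mu_\cdot^{(N)})$ in $\cC([0,T], \cP(\R_{\ge 0}))$ via a Roelly-Coppoletta-type criterion: tightness of $\{\mu_t^{(N)}\}$ in $\cP(\R_{\ge 0})$ at each fixed $t$, combined with tightness in $\cC([0,T], \R)$ of $(\bra{\mu_t^{(N)}, f})_t$ for $f$ in a countable family rich enough to separate probability measures. For the first, I apply the above decomposition to $f(x) = \log(1+x)$: here $f' = 1/(1+x)$, $xf' = x/(1+x)$ and $xf'' = -x/(1+x)^2$ are all bounded, and the repulsion kernel collapses to $\phi_f(x,y) = 1/((1+x)(1+y)) \in [0,1]$, so that a Gr\"onwall-type bound combined with \eqref{log} gives $\sup_N \sup_{t \le T} \Ex \bra{\mu_t^{(N)}, \log(1+x)} < \infty$; Markov's inequality then yields uniform tightness of $\{\mu_t^{(N)}\}$. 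For the second, both the drift and the repulsion integrands in the decomposition are uniformly bounded while the martingale part has vanishing quadratic variation, so Aldous' criterion applies.

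Finally, I identify subsequential limits. By Skorohod representation, along a subsequence I may assume $\mu^{(N)} \to \mu$ almost surely in $\cC([0,T], \cP(\R_{\ge 0}))$. Termwise passage to the limit in the decomposition is then routine since $f$, $f'$, $xf'$, $xf''$ and $\phi_f$ are all bounded continuous, and $\beta N/2 \to c$; the limit satisfies \eqref{equation-mu-intro}. The main technical obstacle is the justification of It\^o's formula when $\beta < 1$ (which inevitably holds for large $N$ in the high-temperature regime), because the particles $\lambda_i$ may collide in finite time and the SDE \eqref{SDE-lambda-intro} becomes singular. The essential point that resolves this is that after symmetrization the Vandermonde singularity is replaced by the bounded kernel $\phi_f$, so the required identity is intrinsically meaningful within the radial Dunkl construction of beta Laguerre processes; the boundedness conditions imposed on $xf'$ and $xf''$ are precisely what is needed for this reduction.
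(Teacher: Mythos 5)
Your proposal follows essentially the same route as the paper: the key step in both is the symmetrized It\^o decomposition in which the singular Vandermonde term is replaced by the bounded kernel $(xf'(x)-yf'(y))/(x-y)$, after which the paper simply defers the tightness and limit-identification steps to the standard arguments of C\'epa--L\'epingle and Rogers--Shi. Your write-up in fact supplies more detail than the paper does (the $\log(1+x)$ Lyapunov computation explaining hypothesis \eqref{log}, the quadratic-variation bound, and the Aldous/Skorohod steps), and these details are correct.
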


By this theorem, the sequence $(\mu_t^{(N)})_{0 \le t \le T}$ will converge in distribution to a deterministic limit once the integro-differential equation~\eqref{equation-mu-intro} is shown to have a unique solution. This paper is not devoted to study the integro-differential equation in more details. Instead, we are going to use the moment method  to establish the convergence of $(\mu_t^{(N)})_{0 \le t \le T}$. By the moment method, we simply mean that the limiting behavior of the empirical measure processes can be derived by studying their moment processes. Under some moments assumptions (\textbf{H1} and \textbf{H2} in Sect.~3), we will show by induction that the $k$th moment process of $\mu_t^{(N)}$ converges in probability (as random elements in the space $\cC([0, T], \R)$ of continuous functions on $[0,T]$ endowed with the uniform norm) to a deterministic limit $m_k(t)$. 
Let $\mu_t$ be the unique probability measure-valued process with moments $\{m_k(t)\}$. (It is unique under our moments assumptions.) Then the convergence of every moment process implies that the sequence $(\mu_t^{(N)})_{1 \le t \le T}$ converges in probability to $(\mu_t)_{0 \le t \le T}$ as $N \to \infty$ (as random elements in $\cC([0, T], \cP(\R_{\ge 0}))$). 
Our main results can be summarized in the following diagram 
\begin{equation}\label{diagram}
		\begin{matrix}
			\mu_t^{(N)} &{\xrightarrow{ {N \to \infty} }}_{(i)} &\mu_t~~~~ \\
			~~~~~~{}_{(iii)}\Big\downarrow t \to \infty	&	&{}_{(ii)}\Big\downarrow t \to \infty	\\
			\beta LE(N)	& {\xrightarrow{{N \to \infty}}}_{(iv)}	& \nu_{\alpha, c}~~~
		\end{matrix}.
\end{equation}
Here (i) and (ii) are the main results in this paper with (i) stated more precisely in Theorem~\ref{thm:moment} and (ii) proved in Subsect.~\ref{subsec:bLE}. (iii) and (iv) were already mentioned above.

We have not  been aware of the use of the moment method in studying empirical measure processes yet. Thus, a general result on the method is given in Appendix~\ref{appendix-moment}. We note here that the moment method also works for the following models: Dyson's Brownian motion models which were already studied in \cite{Cepa-Lepingle-1997, Rogers-Shi-1993}, beta Laguerre processes (the usual type without the drift term $-\lambda_i dt$) and beta Laguerre processes in a regime where $\beta N \to \infty$.

The paper is organized as follows. In Sect.~2, we shortly introduce the type-B radial Dunkl process of Ornstein--Uhlenbeck type, and then define beta Laguerre processes. The limiting behavior of the empirical measure processes is studied in Sect.~3.

\section{Beta Laguerre processes}

\subsection{The B-type radial Dunkl process of Ornstein--Uhlenbeck type} 

Consider the closed subset of $\R^N$ given by
\[
{\W}_B:=\{{\x}\in {\R}^N : 0\leq x_1\leq \cdots \leq x_N \}.
\]
The B-type radial Dunkl process of Ornstein--Uhlenbeck type is defined as the Markov process with infinitesimal generator
\[
L_k [f](\x):=\frac{1}{2}\sum_{i=1}^{N}\frac{\partial^2}{\partial x_i^2}f(\x)+\sum_{i=1}^{N}\Big\{\frac{k_1}{x_i}+k_2\sum_{j:j\neq i}\frac{2 x_i}{x_i^2-x_j^2}-\frac{x_i}{2}\Big\}\frac{\partial}{\partial x_i}f(\x)
\]
for suitable $f\in C^2(\W_B)$. The two parameters $k_1, k_2>0$ are the \emph{multiplicities} of the root system of type B, which is expressed in terms of the canonical basis vectors $\{e_i\}_{i=1}^N$ as
\[
B_N:=\{e_i-e_j, 1\leq i\neq j\leq N\}\cup\{\pm(e_i+e_j), 1\leq i<j\leq N\}\cup\{\pm e\}_{i=1}^N.
\]

 The transition density of the Markov process was found in \cite{Rosler-Voit-1998}. Let $\hat{p}(t,\y|\x)$ be the transition density (the density of arriving at $\y$ after a time $t>0$ having started from $\x$) of the process without confinement (that is, without the restoring drift term $-x_i/2$). Then the transition density of the Ornstein--Uhlenbeck type process is given by $p(t,\y|\x)=\hat{p}(1-e^{-t},\y|\x e^{-t/2})$ (Sect.~10 in \cite{Rosler-Voit-1998}) 
\begin{align}
p(t,\y|\x)=&\frac{1}{c_k(1-e^{-t})^{N/2}}\prod_{i=1}^{N}\frac{y_i^{2k_1}}{(1-e^{-t})^{k_1}}\prod_{1\leq m<n\leq N}\Big(\frac{y_n^2-y_m^2}{1-e^{-t}}\Big)^{2 k_2}\notag\\
&\times \exp\Big(-\frac{\|\y\|^2+\|\x\|^2e^{-t}}{2(1-e^{-t})}\Big)\sum_{\sigma\in W_B}E_k\Big(\frac{\x e^{-t/2}}{\sqrt{1-e^{-t}}},\frac{\sigma\y}{\sqrt{1-e^{-t}}}\Big). \label{joint-density-of-X}
\end{align}
Let us now explain notations in the above formula. The reflection operators along the root system generate the Weyl group $W_B$ of all permutations and component-wise sign changes of vectors in $\R^N$. The function $E_k$ is the \emph{Dunkl kernel}, the joint eigenfunction of Dunkl operators \cite{Dunkl-1989, Dunkl-1991} of type B, and the explicit form of the sum over $\sigma\in W_B$ is given by a multivariate hypergeometric function \cite{Baker-Forrester-1997}, though we do not require its explicit form here. Finally, the normalization constant $c_k$ is given by the Selberg integral
\[
c_k:=2^N N!\int_{\W_B}e^{-\|\x\|^2/2}\prod_{l=1}^{N}x_l^{2k_1}\prod_{1\leq i<j\leq N}(x_j^2-x_i^2)^{2 k_2} d^N\x.
\]
We have used $\norm{\cdot}$ to denote the Euclidean norm in $\R^N$.

The process can be expressed in SDE form by reading off the infinitesimal generator: if we denote the process by $X(t)$ with $X(0)=\x$, then each component of its SDEs reads
\begin{equation}\label{radial-type-B}
 d X_i(t)= d b_i(t)+\Big(\frac{k_1}{X_i(t)}+k_2\sum_{j:j\neq i}\frac{2X_i(t)}{X_i^2(t)-X_j^2(t)}-\frac{X_i(t)}{2}\Big) d t, \quad i = 1, \dots, N,
\end{equation}
with $\{b_i(t)\}_{i = 1, \dots, N}$ standard Brownian motions. The above SDEs can also be treated via an approach in \cite{Cepa-Lepingle-1997} (see also \cite{Demni-2007-arxiv}).

Because the law $p(t,\y|\x)$ is controlled by Gaussian functions, we can use an inequality (\cite{Rosler-1999})
\begin{equation}\label{Dunkl-kernel-bound}
\sum_{\sigma\in W_B}E_k(\x,\sigma\y)\leq 2^N N! \exp(\|\x\|\|\y\|),
\end{equation}
to show that $
	\Ex[\|X_t\|^{2m}] 
$
is uniformly bounded in $t$, for each $m \in \{1,2,\dots\}$. This is a crucial property we need when using the moment method.

\subsection{Beta Laguerre processes}
Let $\lambda_i = X_i^2/2, i = 1, \dots, N$, with $\{X_i\}$ the solution of the SDEs \eqref{radial-type-B}. Then $\{\lambda_i\}$, called beta Laguerre processes, satisfy the following SDEs
\[
 d \lambda_i= \sqrt{2\lambda_i}  d  b_i - \lambda_i  d t  + \Big(k_1 + \frac12\Big) d  t + k_2 \sum_{j : j \neq i} \frac{2\lambda_i}{\lambda_i - \lambda_j} dt, \quad i = 1, \dots, N,
\]
which are exactly the SDEs~\eqref{SDE-lambda-intro} with $\alpha = k_1 + 1/2 > 1/2$ and $\beta = 2 k_2 > 0$. For $\beta \in \{1,2\}$, they are realized as the eigenvalue process of Wishart/Laguerre processes \cite{Bru-1989, Bru-1991, Katori-Tanemura-2004, Konig-OConnell-2001}. Recall that when $\beta \ge 1$ and $\alpha > 0$, the above SDEs are defined in the usual sense and $\{\lambda_i\}$ never collide  (cf. \cite{Graczyk-Jacek-2014}). 

It is clear from the explicit expression for the joint density of $\{X_i(t)\}_{1 \le i \le N}$ in \eqref{joint-density-of-X} that the distribution of $\{\lambda_i(t)\}_{i \le 1 \le N}$, starting from any initial point, converges weakly to the beta Laguerre ensemble~\eqref{BLE} as $t \to \infty$.

\section{Convergence of the empirical measure process}

\subsection{Assumptions}
We now study the limiting behavior of the empirical measure process $\mu^{(N)}_t$ defined in equation~\eqref{empirical}
in the regime where $\beta N \to 2 c \in (0, \infty)$. For simplicity, let $c \in (0, \infty)$ be fixed and $\beta = 2c / N$ in the SDEs \eqref{SDE-lambda-intro}. 
We make the following assumptions on initial data.

\textbf{H1.} Each moment of $\mu_0^{(N)}$ converges, that is, for each $k=1,2,\dots$, there exists a number $a_k$ such that
\[
	\bra{\mu_0^{(N)}, x^k} = \frac1N \sum_{i = 1}^N (\lambda_0^{(N, i)})^k \to a_k\quad \text{as} \quad N \to \infty .
\]

\textbf{H2.}  
The sequence of initial moments $\{a_k\}$ does not grow too fast in the sense that
\begin{equation}\label{upper-lambdak}
	\sum_{k = 1}^\infty \Lambda_k^{-\frac1{2k}} = \infty,
\end{equation}
where $\{\Lambda_k\}$ is defined recursively as 
\[
		\Lambda_1 = (\alpha + c) \vee a_1, \quad \Lambda_k = (\alpha + k - 1 + ck) \Lambda_{k - 1} \vee a_k, \quad k = 2, 3, \dots.
\]
As we will see in Lemma~\ref{lem:limit-moment}, the number $\Lambda_k$ defined in that way gives an upper bound for the $k$th limiting moment process. Then the condition~\eqref{upper-lambdak} is assumed based on Carleman's sufficient condition to ensure that moments uniquely determine the probability measure.

Note that Conditions \textbf{H1} and \textbf{H2} together imply the conditions in the assumption of Theorem~\ref{thm:intro}. Indeed, under Condition \textbf{H2}, the sequence of moments $\{a_k\}$ satisfies
\[
		\sum_{k = 1}^\infty a_k^{-\frac1{2k}}  \ge 	\sum_{k = 1}^\infty \Lambda_k^{-\frac1{2k}} = \infty.
\] 
This is Carleman's sufficient condition under which a probability measure $\mu_0$ on $[0, \infty)$ whose moments match the sequence $\{a_k\}$ is unique. Together with Condition \textbf{H1}, it follows that the sequence of probability measures $\mu_0^{(N)}$ converges weakly to $\mu_0$  (see \cite[Theorem 30.2]{Billingsley} or \cite[\S 3.3.5]{Durrett-book}, for example). Since $\log(1+x) \le x$, for $x \ge 0$, the condition~\eqref{log} is clear.

\subsection{A standard method}

Let us mention a key idea in the proof of Theorem~\ref{thm:intro}.
The proof relies on the following formula which is a direct application of It\^o's formula 
\begin{align}
	d \bra{\mu^{(N)}_t, f(x)} &= \frac 1N \sum_{i = 1}^ N d f (\lambda_i) = \frac1N \sum_{i = 1}^N \Big( f'(\lambda_i) d\lambda_i + \frac12 f''(\lambda_i) (2 \lambda_i)dt  \Big) \notag\\
	&=  \sum_{i = 1}^N \frac1N  \sqrt{2\lambda_i} f'(\lambda_i) db_i +  \frac1N \sum_{i = 1}^N  f'(\lambda_i) (-\lambda_i + \alpha) dt  \notag\\
	&\quad + \frac 1N \sum_{i = 1}^N f'(\lambda_i) \frac{c}{N}\sum_{j \neq i} \frac{2 \lambda_i}{\lambda_i - \lambda_j}dt  + \frac1N \sum_{i = 1}^N \lambda_i f''(\lambda_i) dt \notag\\
	&= \sum_{i = 1}^N \frac1N  \sqrt{2\lambda_i} f'(\lambda_i) db_i + \bra{\mu^{(N)}_t, \alpha f'(x) - x f'(x) + x f''(x)}dt \notag\\
	&\quad +  c \iint \frac{xf'(x) - yf'(y)} {x - y} d\mu^{(N)}_t(x) d\mu^{(N)}_t(y)dt  \notag\\
	&\quad  - \frac cN \bra{\mu^{(N)}_t, xf''(x) + f'(x)} dt, \label{Ito-for-f}
\end{align}
for $f \in C^2(\R_{\ge 0})$. Here we have used the symmetry to deduce the last two terms  
\begin{align*}
	&\frac 1N \sum_{i = 1}^N f'(\lambda_i) \frac{c}{N}\sum_{j \neq i} \frac{2 \lambda_i}{\lambda_i - \lambda_j}dt = \frac c{N^2} \sum_{i \neq j} \frac{\lambda_i f'(\lambda_i) - \lambda_j f'(\lambda_j)}{\lambda_i - \lambda_j} dt\\
	&\quad =  \frac c{N^2} \sum_{i , j} \frac{\lambda_i f'(\lambda_i) - \lambda_j f'(\lambda_j)}{\lambda_i - \lambda_j}dt - \frac{c}{N^2} \sum_{i=j} (\lambda_i f''(\lambda_i) + f'(\lambda_i)) dt\\
	& \quad = c \iint \frac{xf'(x) - yf'(y)} {x - y} d\mu^{(N)}_t(x) d\mu^{(N)}_t(y)dt - \frac cN \bra{\mu^{(N)}_t, xf''(x) + f'(x)} dt.
\end{align*}
Note that singular terms $\frac{2\lambda_i}{\lambda_i - \lambda_j}dt$ as in the system of SDEs~\eqref{SDE-lambda-intro} have been removed in~\eqref{Ito-for-f}.
Then the arguments can run in exactly the same way as those used  in \cite{Cepa-Lepingle-1997, Rogers-Shi-1993}, and hence we omit the details here.

\begin{remark}
Assume that $\mu_t$ is a probability measure-valued process satisfying the equation \eqref{equation-mu-intro}. Let 
\[
	S = S(t,z) = \bra{\mu_t, (\cdot - z)^{-1}} =  \int \frac{d\mu_t(x)}{x - z}, \quad (t \ge 0, z \in \C \setminus \R),
\]
be the Stieltjes transform of $\mu_t$. Then the equation \eqref{equation-mu-intro} with $f = 1/(x-z)$ yields the following partial differential equation for $S$, 
\begin{align*}
	\frac{\partial S}{\partial t} = S + (2 + z - \alpha) \frac{\partial S}{\partial z} + z \frac{\partial^2 S}{\partial z^2} + c \left (S^2 + 2 z S \frac{\partial S}{\partial z} \right).
\end{align*}
If the above equation admits a unique solution, then so does the equation \eqref{equation-mu-intro}. At present, we do not know how to deal with these equations.
\end{remark}

\subsection{The moment method}
In this section, we introduce the moment method to study the limiting behavior of $\mu_t^{(N)}$. We first show the following result.
\begin{theorem}\label{thm:moment}
	Assume that Condition \textbf{H1} is satisfied. Then for any $k = 1,2,\dots,$ the $k$th moment process 
\[
	S_k^{(N)}(t) = \frac1N \sum_{i = 1}^N \lambda_i(t)^k
\]	
converges in probability to a deterministic differentiable function $m_k(t)$ which is defined inductively as the solution to the following initial value ODE 
\begin{equation}\label{ODE-moment}
\begin{cases}
	m_k'(t) = -  k m_k(t) + k\left( (\alpha + k - 1) m_{k - 1}(t) + c \sum_{i = 0}^{k - 1} m_i(t) m_{k - i - 1}(t) \right),\\
	m_k(0) = a_{k},
\end{cases}
\end{equation}
where $m_0 \equiv 1$. To be more precise, this means that for any $T > 0$, as random elements in the space of continuous functions $\cC([0, T], \R)$ endowed with the uniform norm, the sequence $\{S_k^{(N)}(t)\}$ converges in probability to $m_k(t)$.
\end{theorem}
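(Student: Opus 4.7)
The plan is to apply It\^o's formula \eqref{Ito-for-f} to $f(x) = x^k$ and then close the moment hierarchy by induction on $k$. Specializing \eqref{Ito-for-f} to this choice gives
\[
S_k^{(N)}(t) = S_k^{(N)}(0) + M_k^{(N)}(t) + \int_0^t D_k^{(N)}(s)\, ds,
\]
where
\[
D_k^{(N)} := -k S_k^{(N)} + k(\alpha + k - 1) S_{k-1}^{(N)} + ck \sum_{i=0}^{k-1} S_i^{(N)} S_{k-1-i}^{(N)} - \tfrac{ck^2}{N} S_{k-1}^{(N)},
\]
and $M_k^{(N)}(t) = \tfrac{k\sqrt{2}}{N}\sum_{i=1}^N \int_0^t \lambda_i(s)^{k-1/2}\, db_i(s)$ is a continuous local martingale with quadratic variation $\bra{M_k^{(N)}}_t = \tfrac{2k^2}{N}\int_0^t S_{2k-1}^{(N)}(s)\, ds$. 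The drift $D_k^{(N)}$ already matches the target ODE \eqref{ODE-moment} up to a vanishing $O(1/N)$ correction, so three tasks remain: uniform moment control, killing the martingale, and identifying the limit.

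The heart of the argument is a Jensen-type estimate on the empirical measure. Since $\mu_t^{(N)}$ is a probability measure, for $0 \le i \le k-1$ one has $S_i^{(N)} = \bra{\mu_t^{(N)}, x^i} \le (S_{k-1}^{(N)})^{i/(k-1)}$, whence
\[
\sum_{i=0}^{k-1} S_i^{(N)}(s)\, S_{k-1-i}^{(N)}(s) \le k\, S_{k-1}^{(N)}(s).
\]
Taking expectation in the integrated SDE and using this bound (the $O(1/N)$ term is non-positive and can be discarded) yields the differential inequality
\[
\tfrac{d}{dt}\Ex[S_k^{(N)}(t)] \le -k\Ex[S_k^{(N)}(t)] + k(\alpha + k - 1 + ck)\,\Ex[S_{k-1}^{(N)}(t)].
\]
Starting from $S_0^{(N)} \equiv 1$ and using \textbf{H1} to bound $\Ex[S_k^{(N)}(0)]$, Gronwall combined with induction on $k$ produces $\sup_N\sup_{t \le T}\Ex[S_k^{(N)}(t)] \le C_k$, with the constants $C_k$ essentially equal to the $\Lambda_k$ from \textbf{H2}. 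The same argument applied to $x^{2k}$ controls $\Ex[(S_k^{(N)})^2]$ uniformly in $N$ and $t \le T$.

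These moment bounds give $\Ex[\bra{M_k^{(N)}}_T] = O(1/N)$, and Doob's inequality then yields $\Ex[\sup_{t \le T}|M_k^{(N)}(t)|^2] \to 0$. Combined with $L^1$ bounds on the drift, the SDE decomposition produces second-moment estimates on the increments of $S_k^{(N)}$, hence tightness in $\cC([0, T], \R)$ via Aldous' or Kolmogorov's criterion. For $k = 0$, $S_0^{(N)} \equiv 1 = m_0$. Assuming inductively $S_j^{(N)} \to m_j$ in probability in $\cC([0, T], \R)$ for every $j < k$, extract any subsequential limit $\tilde m_k$ of $S_k^{(N)}$; passing to the limit in the integrated SDE (using uniform integrability from the moment bounds and $L^2$-vanishing of the martingale) yields
\[
\tilde m_k(t) = a_k + \int_0^t \Bigl( -k\tilde m_k(s) + k(\alpha + k - 1) m_{k-1}(s) + ck\sum_{i=0}^{k-1} m_i(s) m_{k-1-i}(s) \Bigr) ds,
\]
a linear integral equation in $\tilde m_k$ whose unique solution is $m_k$ from \eqref{ODE-moment}. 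All subsequential limits therefore agree, giving convergence in probability.

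The main obstacle is the uniform moment bound: the nonlinearity $S_i^{(N)} S_{k-1-i}^{(N)}$ a priori couples every moment to every other, so a naive Cauchy--Schwarz loses control of the induction. It is the Jensen step---collapsing the product sum to $k\, S_{k-1}^{(N)}$ and producing the factor $k(\alpha + k - 1 + ck)$ in Gronwall---that yields the exact growth rate encoded in $\Lambda_k$ and makes the induction close.
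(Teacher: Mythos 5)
Your proposal is correct and its skeleton coincides with the paper's: the same It\^o decomposition $S_k^{(N)}(t) = S_k^{(N)}(0) + M_k^{(N)}(t) - k\int_0^t S_k^{(N)}ds + \int_0^t F_k^{(N)}ds$, the same uniform bound $\sup_{N}\sup_{t\le T}\Ex[S_k^{(N)}(t)] < \infty$ obtained by collapsing the convolution sum to $k\,S_{k-1}^{(N)}$ (your Lyapunov/Jensen step is equivalent to the paper's inequality $S_i^{(N)}S_j^{(N)} \le S_{i+j}^{(N)}$), and the same Doob-plus-quadratic-variation argument showing $\sup_{t \le T}|M_k^{(N)}(t)| \to 0$ in probability. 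The one place you diverge is the final identification of the limit: you go through tightness in $\cC([0,T],\R)$ (Aldous/Kolmogorov), extraction of subsequential limits, and uniqueness for the limiting linear integral equation. The paper avoids this compactness step entirely by observing that the integral identity is a linear ODE in $\Psi(t)=\int_0^t S_k^{(N)}(s)\,ds$ and writing the explicit variation-of-constants solution $S_k^{(N)}(t) = \bra{\mu_0^{(N)},x^k}e^{-kt} + \Phi_k^{(N)}(t) - k\bigl(\int_0^t \Phi_k^{(N)}(s)e^{ks}ds\bigr)e^{-kt}$, where $\Phi_k^{(N)} = M_k^{(N)} + \int_0^\cdot F_k^{(N)}ds$; since $\Phi_k^{(N)}$ converges in probability (in sup norm) by the induction hypothesis and the martingale lemma, and the map $\Phi \mapsto S$ is continuous on $\X$, convergence of $S_k^{(N)}$ is immediate. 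Your route works but is slightly more laborious and, as written, a little loose at the tightness step: the raw increment bound for $S_k^{(N)}$ contains a term of order $(t-s)/N$ from the martingale which does not fit Kolmogorov's criterion directly, so you must treat the martingale part separately (it converges uniformly to zero, hence is trivially C-tight) and apply Kolmogorov only to the drift integral, whose increments are $O((t-s)^2)$ in $L^2$ by your uniform second-moment bounds. Alternatively, a Gronwall comparison applied to the difference of the two integral equations gives $\norm{S_k^{(N)} - m_k} \le e^{kT}\norm{\Phi_k^{(N)} - \int_0^\cdot F ds}$ and reproduces the paper's conclusion with no compactness at all.
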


We need some preparations to prove this theorem. To begin with, let us express equation~\eqref{Ito-for-f} with $f = x^k$ in the following form
\begin{align*}
	d S_k^{(N)}(t)  &= \sum_{i = 1}^N \frac{k}{N} \sqrt{2\lambda_i} \lambda_i^{k-1} db_i - k S_k^{(N)}(t) dt \\
	&\quad + \alpha k S_{k-1}^{(N)}(t) dt + c k \sum_{i = 0}^{k-1} S_i^{(N)}(t) S_{k-i-1}^{(N)}(t) dt  \\
	&\quad + k(k-1) S_{k-1}^{(N)}(t) dt - \frac{c k^2}N S_{k-1}^{(N)}(t) dt \\
	&=: dM_k^{(N)}(t) - k  S_k^{(N)}(t)dt +  F_k^{(N)}(t) dt.
\end{align*}
Here 
\[
	M_k^{(N)}(t) =\frac{k}{N} \sum_{i = 1}^N \int_0^t   \sqrt{2\lambda_i(s)} \lambda_i(s)^{k-1} db_i(s) = \frac{\sqrt 2 k}{N} \sum_{i = 1}^N \int_0^t    \lambda_i(s)^{k-1/2} db_i(s) 
\] is a martingale, because of the uniform boundedness of $
	\Ex[\|X_t\|^{2m}] 
$ (a statement following the equation~\eqref{Dunkl-kernel-bound}), with the quadratic variation 
\begin{equation}\label{quadratic-of-M}
	\bra{M_k^{(N)}}_t = \frac{2k^2}{N}   \int_0^t \frac{\sum_{i = 1}^N \lambda_i(s)^{2k-1}}{N}ds,
\end{equation}
and 
\begin{equation}\label{FkN}
	F_k^{(N)}(t) = \left(  k(\alpha + k-1) - \frac{c k^2}N \right) S_{k-1}^{(N)}(t) + c k \sum_{i = 0}^{k-1} S_i^{(N)}(t) S_{k-i-1}^{(N)}(t) .
\end{equation}
Now we write $S_k^{(N)}(t)$ in the integral form
\begin{align}
	S_k^{(N)}(t) &= \bra{\mu^{(N)}_0, x^k}  - k \int_0^t S_k^{(N)}(s)ds + M_k^{(N)}(t) +  \int_0^t F_k^{(N)}(s) ds	\notag \\
	&=: \bra{\mu^{(N)}_0, x^k}  - k \int_0^t S_k^{(N)}(s)ds + \Phi_k^{(N)}(t).\label{integral-form}
\end{align}
Here note that $\Phi_k^{(N)}(t)$ is a continuous function with $\Phi_k^{(N)}(0)=0$. In addition, observe that the above is an ODE for $\Psi(t) =  \int_0^t S_k^{(N)}(s)ds$. Thus $S_k^{(N)}(t)$ has the following explicit expression
\begin{equation}\label{explicit-solution}
	S_k^{(N)}(t)  =  \bra{\mu^{(N)}_0, x^k} e^{-kt} + \Phi_k^{(N)}(t) - k \left(\int_0^t \Phi_k^{(N)}(s) e^{ks} ds \right) e^{-kt}.
\end{equation}

Let $T$ be fixed. Let
$\X = (\cC([0, T], \R),\norm{\cdot})$ be the space of continuous functions on $[0,T]$ endowed with the supremum norm. Then $\X$ is a complete separable metric space. We consider $S_k^{(N)}, M_k^{(N)}$ and $F_k^{(N)}$ as random elements on $\X$.
\begin{definition}
Let $X^{(N)}$ and $X$ be $\X$-valued random elements defined on the same probability space. The sequence $X^{(N)}$ is said to converge in probability to $X$ if $\norm{X^{(N)} - X}$ converges in probability to $0$, that is, for any $\varepsilon > 0$,
\[
	\lim_{N \to \infty} \Prob(\norm{X^{(N)} - X} \ge \varepsilon) = 0.
\]
Note that when $X$ is deterministic, then the condition that  $X^{(N)}$ is defined on the same probability space is not necessary.
\end{definition}

The addition and multiplication operators on  $\X$ are defined pointwisely as usual. Based on the estimates that 
\[
	\norm{x + y} \le \norm{x} + \norm{y}, \quad \norm{xy} \le \norm{x} \norm{y},\quad x, y \in \X,
\]
we can easily show the following.
Assume that $X^{(N)}$ (resp.~$Y^{(N)}$) converges to $X$ (resp.~$Y$) in probability (as random elements on $\X$). Then the following hold.
\begin{itemize}
\item[\rm(i)] $X^{(N)} + Y^{(N)}$ (resp.~$X^{(N)}Y^{(N)}$) converges to $X+Y$  (resp.~$XY$) in probability.

\item[\rm(ii)] $\int_0^t X^{(N)}(s) ds$ converges to $\int_0^t X(s) ds$ in probability.

\end{itemize}

Back to our problem, we now show that the martingale part $M_k^{(N)}$ converges in probability to zero.

\begin{lemma}\label{lem:MTG-part}
	$M_k^{(N)}$ converges in probability to $0$ (in $\X$) as $N \to \infty$.
\end{lemma}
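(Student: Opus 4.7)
The plan is to apply Doob's $L^2$-maximal inequality, reducing uniform convergence on $[0,T]$ to a moment calculation. Since $M_k^{(N)}$ is a continuous square-integrable martingale starting at zero with quadratic variation given by \eqref{quadratic-of-M}, Doob's inequality combined with It\^o isometry yields
\[
	\Ex\bigl[\sup_{0 \le t \le T} |M_k^{(N)}(t)|^2\bigr] \le 4\, \Ex\bigl[\bra{M_k^{(N)}}_T\bigr] = \frac{8 k^2}{N} \int_0^T \Ex[S_{2k-1}^{(N)}(s)]\, ds.
\]
By Markov's inequality, the lemma reduces to establishing the uniform moment bound
\[
	\sup_{N} \sup_{0 \le s \le T} \Ex[S_{2k-1}^{(N)}(s)] < \infty.
\]

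To obtain this bound, the plan is to prove inductively on $j \ge 0$ that $G_j := \sup_{N} \sup_{t \in [0, T]} \Ex[S_j^{(N)}(t)] < \infty$. The base case $j = 0$ is trivial. For the inductive step, take expectations in the integral representation \eqref{integral-form} --- after a standard localization by $\tau_R = \inf\{t : \max_i \lambda_i(t) \ge R\}$ so that the stopped martingale has zero mean, then pass $R \to \infty$ via Fatou --- to obtain
\[
	\Ex[S_j^{(N)}(t)] = \bra{\mu_0^{(N)}, x^j} - j \int_0^t \Ex[S_j^{(N)}(s)]\, ds + \int_0^t \Ex[F_j^{(N)}(s)]\, ds.
\]
The quadratic term $c j \sum_{i=0}^{j-1} S_i^{(N)} S_{j-i-1}^{(N)}$ appearing in $F_j^{(N)}$, see \eqref{FkN}, is the main source of difficulty. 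The key observation is a Jensen-type bound: applying Jensen's inequality to the probability measure $\mu_s^{(N)}$ with the convex function $u \mapsto u^{(j-1)/i}$ (for $1 \le i \le j-1$) gives $S_i^{(N)}(s) \le S_{j-1}^{(N)}(s)^{i/(j-1)}$, and since the exponents $i/(j-1)$ and $(j-i-1)/(j-1)$ sum to $1$, one concludes $S_i^{(N)}(s) S_{j-i-1}^{(N)}(s) \le S_{j-1}^{(N)}(s)$ for all $0 \le i \le j - 1$. Consequently $F_j^{(N)}(s) \le \bigl(j(\alpha + j - 1) + c j^2\bigr) S_{j-1}^{(N)}(s)$ for $N$ large, and a Gronwall argument then produces $G_j \le \sup_N \bra{\mu_0^{(N)}, x^j} + (\alpha + j - 1 + c j)\, G_{j-1}$, which is finite by the inductive hypothesis together with Assumption \textbf{H1}.

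With the uniform bound $\sup_N \sup_{s \in [0,T]} \Ex[S_{2k-1}^{(N)}(s)] \le G_{2k-1} < \infty$ in hand, the preceding Doob estimate gives $\Ex[\norm{M_k^{(N)}}^2] = O(1/N)$, so $\norm{M_k^{(N)}} \to 0$ in $L^2$, and in particular in probability, as required. The main technical hurdle is the Jensen-type reduction of the quadratic term of $F_j^{(N)}$ to a linear expression in $S_{j-1}^{(N)}$; once that reduction is made, the induction closes cleanly and the conclusion follows from standard martingale analysis.
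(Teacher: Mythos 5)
Your proof is correct and follows essentially the same route as the paper: Doob's $L^2$ maximal inequality plus the quadratic variation formula \eqref{quadratic-of-M} reduce the claim to the uniform bound $\sup_N\sup_{t\in[0,T]}\Ex[S_{2k-1}^{(N)}(t)]<\infty$, which both you and the paper establish by induction using the inequality $S_i^{(N)}S_{j}^{(N)}\le S_{i+j}^{(N)}$ (you justify it via Jensen, the paper states it directly) to linearize $F_j^{(N)}$ in $S_{j-1}^{(N)}$. The only cosmetic differences are your localization/Fatou step, where the paper instead invokes the a priori uniform bound on $\Ex[\|X_t\|^{2m}]$ to justify the martingale property.
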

\begin{proof}
By using Doob's martingale inequality, we first estimate  
	\[
		\Prob \left(\norm{ M_k^{(N)}} \ge \varepsilon	\right) = \Prob \left(\sup_{0 \le t \le T} |M_k^{(N)}(t)| \ge \varepsilon	\right) \le \frac{\Ex[M_k^{(N)}(T)^2]}{\varepsilon^2} .
	\]
From this, it suffices to show that $\Ex[M_k^{(N)}(T)^2] \to 0$ as $N \to \infty$. Note from the quadratic formula~\eqref{quadratic-of-M} that 
\[
	\Ex[M_k^{(N)}(T)^2]  = \frac{2k^2}{N}   \Ex\bigg[ \int_0^T \frac{\sum_{i = 1}^N \lambda_i(s)^{2k-1}}{N}ds \bigg] = \frac{2k^2}{N}  \int_0^T \Ex[S_{2k-1}^{(N)}(s)] ds.
\]
Therefore, it now suffices to show that for each fixed $k$, there is a constant $D_k$ such that
\[
	s_k^{(N)}(t) := \Ex[S_k^{(N)}(t)] \le D_k, 
\]
for all $t \in [0, T]$ and all $N$.

Take the expectation in both sides of the identity \eqref{integral-form}, we get that 
\[
	s_k^{(N)}(t) =  \bra{\mu^{(N)}_0, x^k}  - k \int_0^t s_k^{(N)}(s) ds + \int_0^t \Ex[F_k^{(N)}(s)]ds.
\]
Note that Condition~\textbf{H1} implies that the initial moment $\bra{\mu^{(N)}_0, x^k}$ is uniformly bounded. 
Since $S_i^{(N)}(t) S_j^{(N)}(t) \le S_{i+j}^{(N)}(t)$, if follows that
\[
	F_k^{(N)}(t) \le C_k S_{k-1}^{(N)}(t), \quad \Ex[F_k^{(N)}(t)] \le C_k \Ex[S_{k-1}^{(N)}(t)],
\]
and hence, 
\[
	\quad \Ex[F_k^{(N)}(t)] \le C_k \Ex[S_{k-1}^{(N)}(t)] \le C_k D_{k-1},\quad t \in [0, T],
\]
for some constant $C_k$ not depending on $N$. Then the desired uniform boundedness follows immediately by induction. The proof is complete.
\end{proof}

\begin{proof}[Proof of Theorem~\rm\ref{thm:moment}]
Based on formula~\eqref{explicit-solution}, we prove this theorem by induction. The case $k = 0$ is trivial. Assume for now that for $l =0, 1, \dots, k-1$, the sequence $S_l^{(N)}$ converges in probability to a differentiable function $m_l$ (as random elements in $\X$). We need to show that the sequence $S_k^{(N)}$ converges in probability to $m_k$ which satisfies the ODE~\eqref{ODE-moment}.

By the induction hypothesis, it is clear that 
\[
	F_k^{(N)}(t) \to k(\alpha + k - 1) m_{k-1}(t) + c k \sum_{i = 0}^{k-1} m_i(t) m_{k-i-1}(t) =: F(t) \quad \text{in probability.}
\]
Together with Lemma~\ref{lem:MTG-part}, it follows that the function $\Phi_k^{(N)}$ (defined in~\eqref{integral-form}) converges in probability to $\int_0^t F(s) ds.$ Therefore, $S_k^{(N)}$ converges in probability to the limit $m_k$ given by
\begin{align*}
	m_k(t) &= a_k e^{-kt} + \int_0^t F(s) ds - k \left(\int_0^t  \int_0^s F(\tau)d\tau  e^{ks} ds \right) e^{-kt} \\
	&= a_k e^{-kt}  +\left( \int_0^t F(s) e^{ks} ds \right) e^{-kt}	.
\end{align*}
Here we have used integration by parts.
We then conclude that $m_k(t)$ is differentiable, and thus, it satisfies the ODE~\eqref{ODE-moment}. The proof is complete.
\end{proof}

Next, we study the ODE~\eqref{ODE-moment} in more details.
\begin{lemma}\label{lem:limit-moment}
Define a sequence $\{C_{k, 0}\}_{k \ge 0}$ as follows
\[
	\begin{cases}
		C_{0,0} = 1, \\
		C_{k, 0} = (\alpha + k - 1) C_{k - 1, 0} + c \sum_{i = 0}^{k - 1} C_{i, 0} C_{k - i - 1, 0}, \quad k \ge 1.
	\end{cases}
\]
Then for each $k$, the limiting $k$th moment process $m_k(t)$ has the form
\[
	m_k(t) = C_{k, 0} + \sum_{i = 1}^k C_{k, i} e^{-i t},
\] 
where $C_{k,i}$ are constants.
In particular, $\lim_{t \to \infty} m_k(t) = C_{k, 0}$. In addition, it holds that
\[
	\sup_{t \ge 0} m_k(t) \le \Lambda_k,
\]
where $\{\Lambda_k\}$ is the sequence in Condition \textbf{H2}.
\end{lemma}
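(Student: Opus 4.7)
The plan is to prove all three assertions by induction on $k$, exploiting the structure of the ODE~\eqref{ODE-moment} together with one moment interpolation inequality inherited from the empirical measures.

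For the functional form, I would argue by induction. The base case $m_0 \equiv 1$ is trivial. Assuming $m_l(t) = \sum_{j=0}^{l} C_{l,j} e^{-jt}$ for all $l < k$, the forcing term
\[
    G_k(t) := k(\alpha + k - 1) m_{k-1}(t) + c k \sum_{i=0}^{k-1} m_i(t) m_{k-1-i}(t)
\]
is a linear combination of exponentials $e^{-rt}$ with $0 \le r \le k-1$, since products of such exponentials with total exponent $\le k-1$ stay within this range; write $G_k(t) = \sum_{r=0}^{k-1} \gamma_{k,r} e^{-rt}$. The linear ODE $m_k' + k m_k = G_k$, solved with integrating factor $e^{kt}$, gives
\[
    m_k(t) = a_k e^{-kt} + \sum_{r=0}^{k-1} \frac{\gamma_{k,r}}{k-r}\bigl(e^{-rt} - e^{-kt}\bigr),
\]
which regroups into the desired form $C_{k,0} + \sum_{r=1}^k C_{k,r} e^{-rt}$. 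Matching the constant term yields $C_{k,0} = \gamma_{k,0}/k$, and a direct expansion of the constant coefficients in $G_k$ via the inductive formulas for $m_l$ recovers exactly the recursion defining $C_{k,0}$ in the statement. The limit $m_k(t) \to C_{k,0}$ as $t \to \infty$ then follows by inspection.

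For the uniform bound $\sup_{t \ge 0} m_k(t) \le \Lambda_k$, I would induct on $k$, the key ingredient being the interpolation
\[
    m_i(t)\, m_{k-1-i}(t) \le m_{k-1}(t), \qquad 0 \le i \le k-1.
\]
This is obtained at the pre-limit level by H\"older's inequality: since each $\mu_t^{(N)}$ is a probability measure on $[0, \infty)$, one has $S_i^{(N)}(t)\, S_{k-1-i}^{(N)}(t) \le S_{k-1}^{(N)}(t)$ almost surely, and passing to the deterministic limit via Theorem~\ref{thm:moment} preserves the inequality. Substituting this into $G_k$ together with the inductive bound $m_{k-1}(t) \le \Lambda_{k-1}$ yields $G_k(t) \le k(\alpha + k - 1 + ck)\Lambda_{k-1}$, so the ODE gives
\[
    m_k(t) \le a_k e^{-kt} + (\alpha + k - 1 + ck)\Lambda_{k-1}\bigl(1 - e^{-kt}\bigr),
\]
a convex combination of $a_k$ and $(\alpha + k - 1 + ck)\Lambda_{k-1}$, hence bounded by their maximum, which by definition is $\Lambda_k$.

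The principal obstacle, such as it is, is controlling the quadratic sum $\sum_i m_i m_{k-1-i}$ in the ODE: the naive bound $m_i m_{k-1-i} \le \Lambda_i \Lambda_{k-1-i}$ would overshoot and break the induction on $\Lambda_k$. The H\"older interpolation collapses the $k$ summands to $k \cdot m_{k-1}$, matching precisely the coefficient $ck$ in the recursive definition of $\Lambda_k$ and closing the induction. The remaining steps are standard manipulations of a first-order linear ODE.
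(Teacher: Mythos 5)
Your proposal is correct and follows essentially the same route as the paper: induction on $k$, solving the linear ODE explicitly to read off the exponential form and the recursion for $C_{k,0}$, and using the interpolation $m_i m_{k-1-i} \le m_{k-1}$ (which the paper also invokes, justified exactly as you do at the pre-limit level) together with the standard ODE comparison to close the bound $m_k \le (\alpha+k-1+ck)\Lambda_{k-1} \vee a_k = \Lambda_k$. No gaps.
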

\begin{proof}
Again, we prove this lemma by induction. In the proof, we will use some fundamental results on ODEs quoted in Lemma~\ref{lem:ODE1} and Lemma~\ref{lem:ODE2} in Appendix~\ref{appendix-ODE}.

\underline{The case $k = 1$.} The first moment process $m_1(t)$ satisfies the following ODE 
\[
\begin{cases}
	m_1'(t) = (\alpha + c) - m_1(t),\\
	m_1(0) = \bra{\mu_0, x} = a_1. 
\end{cases}
\]
Solving the equation gives the explicit formula 
\[
	m_1(t) = (\alpha + c)(1 - e^{-t}) + a_1 e^{-t} = C_{1, 0} + C_{1,1} e^{-t}.
\]
In particular,
\[
	m_1(t) \le (\alpha + c) \vee a_1 = \Lambda_1.
\]

\underline{The case $k\ge 2$.} By induction, the ODE for $m_k(t)$
can be written as 
\begin{align*}
	m_k'(t) = - k m_k(t) + k C_{k, 0} + \sum_{i = 1}^{k-1} D_{k, i} e^{-i t},
\end{align*}
where 
\[
	C_{k, 0} = (\alpha + k - 1) C_{k-1, 0} + c \sum_{i = 0}^{k - 1} C_{i, 0} C_{k - i - 1, 0},
\]
and $\{D_{k, i}\}_{1 \le i \le k - 1}$ are constants.
This implies an explicit formula for $m_k(t)$.
For the upper bound, since $m_i(t) m_j(t) \le m_{i + j}(t)$, it follows that
\[
	m_k'(t) \le - k m_k(t) + k (\alpha + k - 1) m_{k - 1}(t) + k^2 c m_{k - 1}(t),
\]
from which we deduce that
\[
	m_k(t) \le (\alpha + k - 1 + c k) \Lambda_{k-1} \vee a_k = \Lambda_k.
\]
The lemma is proved.
\end{proof}

As an example, we calculate the first five limiting moment processes in case $\alpha = 1, c = 1$ with trivial initial condition $\mu_0 = \delta_1$, that is, $a_k = 1, k = 1, 2, \dots,$
\begin{align*}
	&m_1(t) = (\alpha + c)(1 - e^{-t}) + a_1 e^{-t} = 2 - e^{-t}, \\
	&m_2(t) = 8 - 8 e^{-t} + e^{-2t},\\
	&m_3(t) = 44  - 66 e^{-t}+ 18 e^{-2 t} + 5 e^{-3 t} ,\\
	&m_4(t) = 96  - 592 e^{-t}  + 256 e^{-2 t} + 112 e^{-3 t}  - 71 e^{-4 t},\\
	&m_5(t) = 2312     -  5780 e^{-t} + 3460 e^{-2 t}+ 1880 e^{-3 t}- 2530 e^{-4 t} + 659 e^{-5 t}.
\end{align*}

We are now ready to state the main result of this paper.

\begin{theorem}\label{thm:main-H1-H2}
Assume that Conditions \textbf{H1} and \textbf{H2} are satisfied. Then for any $T>0$, the sequence of empirical measure processes $\mu_t^{(N)}$ converges in probability in $\cC([0,T], \cP(\R_{\ge 0}))$ to a continuous probability measure-valued process $\mu_t$ as $N \to \infty$. Here $\mu_t$ is the unique measure whose moments are given by $\{m_k(t)\}_{k = 1}^\infty$.
\end{theorem}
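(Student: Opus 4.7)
The plan is to assemble three ingredients already in place: the moment convergence of Theorem~\ref{thm:moment}, the a priori bound $\sup_{t\ge 0} m_k(t)\le \Lambda_k$ from Lemma~\ref{lem:limit-moment}, and the tightness assertion of Theorem~\ref{thm:intro}, whose hypotheses are implied by \textbf{H1}+\textbf{H2} (as observed in Subsection~3.1). The output will be a deterministic limit path $(\mu_t)_{0\le t\le T}$ characterised uniquely by its moments, from which convergence in probability follows automatically.

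First I would pin down the candidate limit. By Lemma~\ref{lem:limit-moment} the numbers $m_k(t)$ are dominated by $\Lambda_k$, and Condition~\textbf{H2} is exactly Carleman's sufficient condition for the Hamburger moment problem with moments $\{m_k(t)\}$ to be determinate. Since each $m_k(t)$ arises as the limit of $k$th moments of the probability measures $\mu_t^{(N)}$ supported in $[0,\infty)$, standard moment-problem arguments yield a unique probability measure $\mu_t$ on $[0,\infty)$ with $\bra{\mu_t,x^k}=m_k(t)$ for every $k$. To see that $t\mapsto \mu_t$ belongs to $\cC([0,T],\cP(\R_{\ge 0}))$, I would use that each $m_k(\cdot)$ is $C^1$ by~\eqref{ODE-moment}: for any $t_n\to t$ the family $\{\mu_{t_n}\}$ is tight (uniformly bounded first moments), every weak limit point has moments $m_k(t)$, and determinacy forces $\mu_{t_n}\Rightarrow \mu_t$.

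Next I would identify the limit of $(\mu^{(N)})$. Theorem~\ref{thm:intro} gives tightness of $(\mu^{(N)})$ in $\cC([0,T],\cP(\R_{\ge 0}))$, so by Prokhorov it is enough to show that any weakly convergent subsequence has the prescribed limit. Fix such a subsequence with weak limit $\tilde\mu$. Theorem~\ref{thm:moment} states that $S_k^{(N)}\to m_k$ in probability in $\cC([0,T],\R)$ for every $k$. To pass these moment identities through the weak limit despite $x^k$ being unbounded, I would invoke the uniform estimate $\sup_N\sup_{t\le T}\Ex[S_{k+1}^{(N)}(t)]\le D_{k+1}$ obtained inside the proof of Lemma~\ref{lem:MTG-part}, which supplies uniform integrability of $x^k$ under $\mu_t^{(N)}$. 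This yields $\bra{\tilde\mu_t,x^k}=m_k(t)$ for every $k,t$ almost surely, and the determinacy established in the previous step forces $\tilde\mu_t=\mu_t$ for every $t\in[0,T]$ almost surely. Since the limit is deterministic, weak convergence of the full sequence upgrades to convergence in probability in the uniform-topology metric of $\cC([0,T],\cP(\R_{\ge 0}))$.

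The point I expect to be the main obstacle is the passage from uniform-in-$t$ convergence of countably many scalar moment processes to uniform-in-$t$ weak convergence of the measures themselves. This hinges on verifying uniform tightness of the family $\{\mu_t^{(N)}:N\ge 1,\,t\in[0,T]\}$, which lets one replace a bounded continuous test function $\phi$ on $[0,\infty)$ by a polynomial approximation on a large enough compact with small uniform error in both $N$ and $t$; the desired uniform convergence $\sup_t|\bra{\mu_t^{(N)},\phi}-\bra{\mu_t,\phi}|\to 0$ in probability then reduces to the finitely many moment-process convergences already in hand. This general principle is presumably codified in Appendix~\ref{appendix-moment}, and I would apply it as a black box after checking its hypotheses (uniform moment bounds plus convergence of each moment process plus determinacy of the limit), all of which are furnished above.
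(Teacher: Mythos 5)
Your proposal reaches the right conclusion, but it takes a heavier route than the paper and slightly mischaracterizes the appendix result it ultimately leans on. The paper's proof of Theorem~\ref{thm:main-H1-H2} does not use the tightness statement of Theorem~\ref{thm:intro} at all: it simply combines Theorem~\ref{thm:moment} (each moment process $S_k^{(N)}$ converges in probability in $\cC([0,T],\R)$ to $m_k$), Lemma~\ref{lem:limit-moment} plus \textbf{H2} (so $m_k(t)\le\Lambda_k$ and Carleman gives determinacy of each $\mu_t$ on $[0,\infty)$), and then Theorem~\ref{thm:A}, which converts convergence in probability of every moment process directly into convergence in probability in $\cC([0,T],\cP(\R_{\ge 0}))$; continuity of $t\mapsto\mu_t$ also follows from determinacy together with continuity of the $m_k$. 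Note that Theorem~\ref{thm:A} requires only these two inputs --- moment-process convergence and determinacy of the limit --- and its proof is a deterministic contradiction argument (moments of $\mu^{(N_l)}(t_l)$ converge to those of the determinate $\mu(t)$, hence $\mu^{(N_l)}(t_l)\Rightarrow\mu(t)$), not the uniform-tightness-plus-polynomial-approximation scheme you describe, and it needs no uniform moment bounds beyond the convergence itself. Your main route --- tightness from Theorem~\ref{thm:intro}, Prokhorov, identification of subsequential limits via uniform integrability (using $\Ex[S_{k+1}^{(N)}(t)]\le D_{k+1}$ or, more cleanly, the a.s.\ uniform convergence of $S_{k+1}^{(N)}$ after a Skorokhod coupling), determinacy, and the upgrade from weak convergence to convergence in probability because the limit is deterministic --- can be made rigorous, but it duplicates work that the appendix lemma already does without any path-space tightness. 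One minor slip: the relevant moment problem is the Stieltjes one on $[0,\infty)$, for which $\sum_k \Lambda_k^{-1/(2k)}=\infty$ is the appropriate Carleman condition; calling it the Hamburger problem is inaccurate, though harmless for the argument.
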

\begin{proof}
	Under Conditions \textbf{H1} and \textbf{H2}, Theorem~\ref{thm:moment} and Lemma~\ref{lem:limit-moment} imply that for each $t$, the sequence of limit moments $\{m_k(t)\}$ satisfies 
	\[
		\sum_{k = 1}^\infty m_k(t)^{-\frac{1}{2k}}  \ge \sum_{k = 1}^\infty \Lambda_k^{-\frac1{2k}} = \infty.
	\]
Therefore, there is a unique probability measure $\mu_t$ on $[0, \infty)$ whose moments are $\{m_k(t)\}$. The process $(\mu_t)_{t \ge 0}$ is continuous because $\mu_t$ is determined by moments. It follows from Theorem~\ref{thm:A} that the sequence $\mu_t^{(N)}$ converges in probability to $\mu_t$ in $\cC([0,T], \cP(\R_{\ge 0}))$, for each $T > 0$. The proof is complete.
\end{proof}

\subsection{Beta Laguerre ensembles at high temperature}\label{subsec:bLE}
For $\beta$LEs, in the regime where $\beta N \to 2c \in (0, \infty)$, the limiting behavior of the empirical distributions has been studied in \cite{Allez-Wishart-2013, Trinh-Trinh-2021}. It was shown that as $N \to \infty$, the empirical distribution 
\[
	L_N = \frac1N \sum_{i = 1}^N \delta_{\lambda_i}
\]
converges weakly to the probability measure $\nu_{\alpha, c}$ which is the probability measure of associated Laguerre orthogonal polynomials (model II) \cite{Trinh-Trinh-2021}. It is the spectral measure of the following Jacobi matrix (symmetric tridiagonal matrix)
\[
	J_{\alpha, c} = 
	\begin{pmatrix}
		\sqrt{\alpha + c} \\
		\sqrt{c + 1}	& \sqrt{\alpha + c + 1}\\
		&\ddots	&\ddots
	\end{pmatrix}
	\begin{pmatrix}
		\sqrt{\alpha + c } & \sqrt{c + 1}	\\
		&\sqrt{\alpha + c + 1}	&\sqrt{c+2}\\
		&&\ddots	&\ddots
	\end{pmatrix},
\]
that is, the measure $\nu_{\alpha, c}$ is determined by moments with moments given by 
\[
	\bra{\nu_{\alpha, c}, x^k} = (J_{\alpha, c})^k(1,1) =: u_k, \quad k =0,1,2,\dots.
\]
The density and the Stieltjes transform of $\nu_{\alpha, c}$ were calculated in \cite{Ismail-et-al-1988} 
\begin{align*}
	\nu_{\alpha, c}(x) &= \frac{1}{\Gamma(c+1) \Gamma(c+\alpha)} \frac{x^{\alpha-1} e^{-x}}{|\Psi(c, 1-\alpha; x e^{-i \pi})|^2}, \quad x \ge 0,\\
	S_{\nu_{\alpha, c}}(z) &= \int_0^\infty \frac{\nu_{\alpha, c} (x) dx}{x - z} = \frac{\Psi(c+1, 2-\alpha; -z)}{\Psi(c, 1-\alpha; -z)}, \quad z \in \C \setminus \R.
\end{align*}
Here $\Psi(a, b; z)$ is Tricomi's confluent hypergeometric function.

On the other hand, using ideas in \cite{Trinh-Shirai-2015}, we can show that the sequence of moments $\{u_k\}$ satisfies the self-convolutive equation 
\begin{equation}\label{self-convolutive}
	u_k = (\alpha + k - 1) u_{k - 1} + c \sum_{i = 0}^{k -1} u_i u_{k - i -1}, \quad k = 1,2,\dots.
\end{equation}
Therefore the sequence $\{C_{k,0}\}$ in Lemma~\ref{lem:limit-moment} coincides with the sequence of moments $\{u_k\}$ of $\nu_{\alpha, c}$. Note that from the self-convolutive equation, we can also calculate explicitly the density of $\nu_{\alpha, c}$  by using the result in \cite{Martin-Kearney-2010}. Thus, Theorem~\ref{thm:main-H1-H2} and Lemma~\ref{lem:limit-moment} imply that the limit process $\mu_t$ satisfies 
\[
	\lim_{t \to \infty} \mu_t = \nu_{\alpha, c},
\]
proving the convergence (ii) in the diagram~\eqref{diagram}.

\appendix

\section{Convergence of probability measure-valued processes}\label{appendix-moment}

Let $\Y = \cC([0, T], \cP(\R))$ be the space of continuous mappings $\mu \colon [0,T] \to \cP(\R)$ endowed with the topology of uniform convergence, where $\cP(\R)$ is the space of probability measures on $\R$ endowed with the topology of weak convergence. For definiteness, we consider the L\'evy--Prokhorov metric on $\cP(\R)$ which makes it a complete and separable metric space. Then $\Y$ can be metrizable to become a complete separable metric space. Recall that $\X = \cC([0, T], \R)$ is the space of continuous functions on $[0, T]$ endowed with the uniform norm. We are going to show the following result which can be roughly stated as the convergence of moments implies the convergence of measures at the process level.

\begin{theorem}\label{thm:A}

Let $\mu^{(N)}$ be a sequence of random elements on $\Y$. Assume that for each $k$, the $k$th moment process $\bra{\mu^{(N)}(t), x^k}$ is an $\X$-valued random element converging in probability to a non-random limit $m_k(t)$. For each $t \in [0,T]$, let $\mu(t)$ be a probability measure having moments $\{m_k(t)\}_{k \ge 1}$. Assume further that the measure $\mu(t), t \in [0,T],$ is determined by moments. Then $\mu = (\mu(t))_{0 \le t \le T}$ is an element in $\Y$, and the sequence $\mu^{(N)}$ converges in probability to $\mu$ as $N \to \infty$ as $\Y$-valued random elements.
\end{theorem}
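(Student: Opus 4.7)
My approach is a tightness-plus-identification argument. Since $\Y$ is Polish and the candidate limit $\mu$ is deterministic, convergence in distribution in $\Y$ is equivalent to convergence in probability, so it suffices to verify that $\mu \in \Y$, that $\{\mu^{(N)}\}$ is tight in $\Y$, and that every subsequential weak limit coincides with $\mu$. That $\mu \in \Y$ is straightforward: each $m_k$ is continuous on $[0, T]$ as a uniform-in-probability limit of the continuous moment processes $t \mapsto \bra{\mu^{(N)}(t), x^k}$, so for $t_n \to t$ every $m_k(t_n)$ converges to $m_k(t)$; boundedness of $m_2$ on $[0, T]$ gives tightness of $\{\mu(t_n)\}$ in $\cP(\R)$, and moment determinacy then yields $\mu(t_n) \to \mu(t)$ weakly.

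The key technical ingredient used throughout is the following quantitative moment-to-Prokhorov bound: given $M, \eta > 0$, there exist an integer $K$ and $\gamma > 0$ such that whenever $\mu, \nu \in \cP(\R)$ satisfy $\int x^{2K} d(\mu + \nu) \le M$ and $|\bra{\mu, x^k} - \bra{\nu, x^k}| < \gamma$ for every $k \le K$, one has $d_{LP}(\mu, \nu) < \eta$, where $d_{LP}$ denotes the L\'evy--Prokhorov metric. This is established by testing against bounded continuous functions, truncating them to a large interval $[-R, R]$ (with tails controlled by Markov's inequality and the moment bound $M$), approximating the truncation by a polynomial via Stone--Weierstrass, and then using a Cauchy--Schwarz-type estimate together with the higher moment $\int x^{2K} d\mu$ to absorb the tail of the approximating polynomial.

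With this lemma in hand, tightness of $\{\mu^{(N)}\}$ in $\Y$ follows from the standard pointwise-tightness-plus-modulus-of-continuity criterion for $\cC([0,T], \cP(\R))$. Pointwise tightness at each $t$ is immediate because $\bra{\mu^{(N)}(t), x^2}$ is bounded in probability (converging to $m_2(t)$). For the modulus of continuity in $d_{LP}$, on the high-probability event that $\sup_{t \in [0,T]} |\bra{\mu^{(N)}(t), x^k} - m_k(t)|$ is small for all $k \le 2K$, the difference $|\bra{\mu^{(N)}(s), x^k} - \bra{\mu^{(N)}(t), x^k}|$ is small for $|s-t| \le \delta$ by uniform continuity of the $m_k$'s on the compact $[0, T]$; invoking the lemma converts this into an $\eta$-modulus bound for $d_{LP}$.

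For identification, let $\mu^*$ be any weak subsequential limit of $\mu^{(N_j)}$ in $\Y$. Continuity of the evaluation $\mu \mapsto \mu(t)$ gives $\mu^{(N_j)}(t) \to \mu^*(t)$ in distribution in $\cP(\R)$, hence $\bra{\mu^{(N_j)}(t), f} \to \bra{\mu^*(t), f}$ in distribution for each bounded continuous $f$ on $\R$. The same truncation-and-approximation argument used above also yields $\bra{\mu^{(N)}(t), f} \to \bra{\mu(t), f}$ in probability for each such $f$. These two limits must agree in distribution, and since $\bra{\mu(t), f}$ is constant, $\bra{\mu^*(t), f} = \bra{\mu(t), f}$ almost surely. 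Taking a countable convergence-determining family of $f$'s and using a countable dense set of $t$'s together with path continuity gives $\mu^* = \mu$ almost surely in $\Y$. The main obstacle is the quantitative moment-to-Prokhorov lemma: the polynomial approximation is delicate because polynomials do not decay at infinity, so the tail contribution of the approximating polynomial against the measure must be absorbed by a moment of sufficiently high order relative to the degree of the polynomial.
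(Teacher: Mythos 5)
Your overall architecture (tightness in $\Y$ plus identification of subsequential limits, then upgrading distributional convergence to convergence in probability because the limit is deterministic) is sound in principle, but the whole construction rests on the quantitative moment-to-Prokhorov lemma, and that is where there is a genuine gap. The lemma as you state it is true, but only because its quantifier order --- $M$ and $\eta$ given first, then $K$ chosen afterwards --- forces the measures to be nearly compactly supported: if $\int x^{2K}\,d\mu\le M$ with $M$ fixed and $K$ huge, then $\mu(|x|>2\max(M,1))\le 4^{-K}$, and it is this near-compactness that lets you absorb the tail of the approximating polynomial. In the application, however, the hypothesis $\int x^{2K}\,d\bigl(\mu^{(N)}(s)+\mu^{(N)}(t)\bigr)\le M$ must be verified for the $K=K(M,\eta)$ that the lemma produces, and the only available bound on the $2K$th moments is $\sup_t m_{2K}(t)\le\Lambda_{2K}$, which grows at least factorially in $K$ (indeed $\Lambda_k\ge(\alpha+k-1+ck)\Lambda_{k-1}$, and the limiting measure $\nu_{\alpha,c}$ has exponential tails). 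Since $K(M,\eta)$ necessarily increases with $M$ (through the truncation radius $R$ and the Jackson degree $d$), there is no pair $(M,K)$ with $K=K(M,\eta)$ and $M\ge 2\Lambda_{2K}+1$: the iteration $M\mapsto K(M)\mapsto 2\Lambda_{2K(M)}+1$ diverges. This is not a technicality you can optimize away; the implication ``moments converge $+$ limit determined by moments $\Rightarrow$ weak convergence'' is inherently qualitative (it fails without the determinacy hypothesis, and indeterminate measures have all moments finite), so no uniform quantitative bound over a class of measures with factorially growing moments can be expected. Both your modulus-of-continuity estimate and your identification step (``the same truncation-and-approximation argument yields $\bra{\mu^{(N)}(t),f}\to\bra{\mu(t),f}$'') rely on this inapplicable estimate.

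The paper avoids the issue entirely by keeping the argument qualitative. It reduces the theorem to a deterministic lemma: if every moment process $\bra{\mu^{(N)}(\cdot),x^k}$ converges \emph{uniformly} on $[0,T]$ to $m_k$ and each $\mu(t)$ is moment-determined, then $\mu^{(N)}\to\mu$ in $\Y$; this is proved by contradiction, extracting $t_l\to t$ with $\mu^{(N_l)}(t_l)\not\to\mu(t)$ and noting that $\bra{\mu^{(N_l)}(t_l),x^k}\to m_k(t)$ for every $k$, which by determinacy forces weak convergence --- a contradiction. The random statement then follows by the standard subsequence principle: from any subsequence extract, by a diagonal argument over $k$, a further subsequence along which all moment processes converge uniformly almost surely, and apply the deterministic lemma pathwise. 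If you want to salvage your write-up, replace the quantitative lemma by this compactness-in-$t$ contradiction argument (which also makes the tightness and identification superstructure unnecessary, since it delivers $\sup_t d_{LP}(\mu^{(N)}(t),\mu(t))\to 0$ directly).
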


Analogous to the case of random probability measures case (\cite[Lemma~2.2]{Trinh-ojm-2018}), the above theorem follows directly from the following deterministic result. 
\begin{lemma}
	Let $\{\mu^{(N)}(t)\}$ be a sequence in $\Y$ such that for each $k=1,2,\dots$, the sequence $\{\bra{\mu^{(N)}(t), x^k}\} \subset \X$ converges uniformly to a limit $m_k(t)$. Assume that for each $t \in [0,T]$, the sequence of moments $\{m_k(t)\}$ uniquely determines the probability measure $\mu(t)$. Then $\mu=(\mu(t))_{0 \le t \le T} \in \Y$ and the sequence $\{\mu^{(N)}\}$ converges to $\mu$.
\end{lemma}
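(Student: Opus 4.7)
The plan is to first show that the limiting process $\mu = (\mu(t))_{0 \le t \le T}$ lies in $\Y$, i.e.\ that $t \mapsto \mu(t)$ is weakly continuous, and then to upgrade pointwise weak convergence to uniform-in-$t$ convergence via a compactness-and-contradiction argument. Let $d_{LP}$ denote the L\'evy--Prokhorov metric on $\cP(\R)$. The workhorse throughout will be the classical method of moments (cf.\ Billingsley, Theorem 30.2): a sequence of probability measures on $\R$ whose moments converge to the moments of a probability measure uniquely determined by its moments converges weakly (tightness being supplied, e.g., by the convergence of second moments).

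For the continuity of $\mu$, I will first note that each $m_k$, being a uniform limit of the continuous functions $t \mapsto \bra{\mu^{(N)}(t), x^k}$, is itself continuous on $[0, T]$. Given $t_n \to t$ in $[0, T]$, the convergence $m_k(t_n) \to m_k(t)$ then holds for every $k$, and since $\mu(t)$ is by hypothesis the unique probability measure with moments $\{m_k(t)\}$, the method of moments yields $\mu(t_n) \to \mu(t)$ weakly. This gives $\mu \in \Y$.

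For the uniform convergence, I will argue by contradiction. Suppose $\sup_{t \in [0,T]} d_{LP}(\mu^{(N)}(t), \mu(t))$ does not tend to $0$; then along a subsequence (relabeled as $N$) there exist $\varepsilon > 0$ and times $t_N \in [0,T]$ with $d_{LP}(\mu^{(N)}(t_N), \mu(t_N)) \ge \varepsilon$. Passing to a further subsequence, by compactness of $[0,T]$, I may assume $t_N \to t^* \in [0,T]$. The key estimate is
\begin{equation*}
	|\bra{\mu^{(N)}(t_N), x^k} - m_k(t^*)| \le \sup_{0 \le t \le T} |\bra{\mu^{(N)}(t), x^k} - m_k(t)| + |m_k(t_N) - m_k(t^*)|,
\end{equation*}
whose right-hand side vanishes as $N \to \infty$, by the uniform-in-$t$ moment convergence and the continuity of $m_k$. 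Hence $\bra{\mu^{(N)}(t_N), x^k} \to m_k(t^*)$ for every $k$, so the method of moments again gives $\mu^{(N)}(t_N) \to \mu(t^*)$ weakly, and therefore $d_{LP}(\mu^{(N)}(t_N), \mu(t^*)) \to 0$. Combined with $d_{LP}(\mu(t_N), \mu(t^*)) \to 0$ from the previous step, the triangle inequality forces $d_{LP}(\mu^{(N)}(t_N), \mu(t_N)) \to 0$, contradicting the choice of $t_N$.

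The main obstacle I anticipate is the interplay between uniform-in-$t$ convergence of individual moment processes and pointwise-in-$t$ method-of-moments: the two are reconciled by the compactness-and-contradiction device, which lets me evaluate each moment process at a moving time $t_N \to t^*$ while still extracting $m_k(t^*)$ in the limit. Once that bridge is built, the remainder of the argument is essentially mechanical.
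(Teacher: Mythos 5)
Your proof is correct and follows essentially the same route as the paper: argue by contradiction, extract a subsequence of times $t_N \to t^*$, use the uniform convergence of each moment process together with the continuity of $m_k$ to get $\bra{\mu^{(N)}(t_N), x^k} \to m_k(t^*)$, and conclude by the method of moments (the determinacy hypothesis). You merely spell out, via the L\'evy--Prokhorov triangle inequality, the step the paper leaves implicit.
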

\begin{proof}
Since the functions $m_k(t)$ are continuous and for each $t$ the measure $\mu(t)$ is determined by moments, it is clear that $\mu$ is an element of $\Y$.
We will show that $\{\mu^{(N)}\}$ converges to $\mu$ by contradiction. Indeed, assume for contradiction that the sequence $\{\mu^{(N)}\}$ does not converge to $\mu$. Then we can find a subsequence $\{N_l\} \subset \N$, a sequence $\{t_l\} \subset [0,T]$ converging to $t$ such that 
$
	\mu^{N_l} (t_l)
$
does not converge to $\mu (t)$. However, each moment of $\mu^{N_l}({t_l}) $ converges to that of $\mu(t)$ by the uniform convergence assumption, implying the weak convergence of probability measure, which is a contradiction. The lemma is proved.
\end{proof}

\section{Fundamental results on ODEs}\label{appendix-ODE}

\begin{lemma}\label{lem:ODE1}
The solution to the initial value ODE
\[
	\phi'(t) = -k \phi(t) + F(t), \quad \phi(0) = \phi_0,
\]
is of the form
\[
	\phi(t) =\left (\phi_0 + \int_0^t F(s) e^{k s} ds \right) e^{-kt}.
\]
Consequently, if $F(t) \le G(t), t \ge 0$, then 
\[
	\phi(t) \le \psi(t), \quad (t\ge 0),
\]
where $\psi(t)$ is the solution to the equation 
\[
	\psi'(t) = -k \psi(t) + G(t), \quad \psi(t) = \phi_0.
\]
\end{lemma}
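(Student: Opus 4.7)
The plan is to prove the explicit formula by the standard integrating factor trick, then derive the comparison statement as an immediate corollary by subtraction.

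For the first assertion, I would multiply the ODE $\phi'(t) = -k\phi(t) + F(t)$ on both sides by the integrating factor $e^{kt}$, which turns the left-hand side into the exact derivative $(e^{kt}\phi(t))' = e^{kt} F(t)$. Integrating from $0$ to $t$ gives
\[
e^{kt}\phi(t) - \phi_0 = \int_0^t e^{ks} F(s)\, ds,
\]
and multiplying through by $e^{-kt}$ yields the claimed formula. A direct substitution back into the ODE confirms it is indeed a solution, and uniqueness is standard (the associated homogeneous equation $\phi' = -k\phi$ has only the trivial solution with zero initial data).

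For the comparison statement, the plan is to apply the explicit formula to both $\phi$ and $\psi$, which share the same initial condition $\phi_0$, giving
\[
\psi(t) - \phi(t) = \left( \int_0^t (G(s) - F(s)) e^{ks}\, ds \right) e^{-kt}.
\]
Under the hypothesis $F(s) \le G(s)$ for all $s \ge 0$, the integrand is nonnegative, and $e^{-kt} > 0$, so the right-hand side is nonnegative, which gives $\phi(t) \le \psi(t)$ for all $t \ge 0$.

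There is essentially no obstacle here — this is a textbook linear first-order ODE computation — so the only thing to watch is notation (in particular that the stated initial condition for $\psi$ in the lemma should read $\psi(0) = \phi_0$, matching the setup). I would keep the proof to a few lines built around the two displays above.
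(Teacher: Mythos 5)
Your proof is correct: the integrating-factor computation and the subtraction argument for the comparison are exactly the standard reasoning this lemma rests on (the paper states it without proof as a fundamental ODE fact), and your reading of the initial condition as $\psi(0)=\phi_0$ is the intended one.
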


\begin{lemma}\label{lem:ODE2}
	The solution to the initial value ODE 
	\[
		\phi'(t) = k( -\phi + D),\quad \phi(0) = C,
	\]
where $k, C, D > 0$ are constants, is given by 
\[
	\phi(t) = D(1 - e^{-kt}) + C e^{-kt}.
\]
Consequently, 
\[
	\sup_{t \ge 0} \phi(t) \le C \vee D.
\]
\end{lemma}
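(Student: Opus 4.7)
The plan is to recognize this as a first-order linear ODE with constant coefficients and solve it by the standard integrating factor method. Rewriting the equation as $\phi'(t) + k\phi(t) = kD$, I would multiply both sides by the integrating factor $e^{kt}$ to obtain $(e^{kt}\phi(t))' = kD e^{kt}$. Integrating from $0$ to $t$ yields $e^{kt}\phi(t) - C = D(e^{kt} - 1)$, which rearranges immediately to the claimed formula $\phi(t) = D(1 - e^{-kt}) + C e^{-kt}$. (Alternatively, one could invoke Lemma~\ref{lem:ODE1} with the constant forcing $F(t) \equiv kD$ and compute $\int_0^t kD\, e^{ks} ds = D(e^{kt}-1)$, arriving at the same expression.)

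For the supremum bound, I would rewrite the solution in the form $\phi(t) = D + (C - D)e^{-kt}$, which makes the monotonicity transparent since $k > 0$ forces $e^{-kt}$ to decrease monotonically from $1$ to $0$ on $[0, \infty)$. In the case $C \le D$, the coefficient $C - D$ is nonpositive, so $\phi$ is nondecreasing and bounded above by its limit $D = C \vee D$. In the case $C \ge D$, the coefficient is nonnegative, so $\phi$ is nonincreasing from its initial value $C = C \vee D$. Either way, $\sup_{t \ge 0} \phi(t) \le C \vee D$, completing the proof.

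There is no real obstacle here: both assertions are elementary consequences of integration and the sign analysis of $(C-D)e^{-kt}$. The only thing to be careful about is to keep track of the convex combination structure $\phi(t) = (1-e^{-kt}) D + e^{-kt} C$, which makes the bound by $C \vee D$ immediate from the fact that a convex combination of two numbers is bounded by their maximum.
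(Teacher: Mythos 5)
Your proof is correct and complete: the integrating factor computation gives the stated solution, and writing $\phi(t) = (1-e^{-kt})D + e^{-kt}C$ as a convex combination immediately yields the bound $\sup_{t \ge 0}\phi(t) \le C \vee D$. The paper states Lemma~\ref{lem:ODE2} without proof as an elementary fact, and your argument is exactly the standard one that is being taken for granted.
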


\bigskip
\noindent{\textbf{Acknowledgements.}} 
The authors would like to thank the reviewer(s) for their thorough review and helpful suggestions.
The authors would like also to thank Prof. Fumihiko Nakano and Dr.~Sergio Andraus for useful discussions.  This work is supported by JSPS KAKENHI Grant Number JP19K14547 (K.D.T.).

\begin{footnotesize}


\begin{thebibliography}{10}
\providecommand{\url}[1]{{#1}}
\providecommand{\urlprefix}{URL }
\expandafter\ifx\csname urlstyle\endcsname\relax
  \providecommand{\doi}[1]{DOI~\discretionary{}{}{}#1}\else
  \providecommand{\doi}{DOI~\discretionary{}{}{}\begingroup
  \urlstyle{rm}\Url}\fi

\bibitem{Allez-Wishart-2013}
Allez, R., Bouchaud, J.P., Majumdar, S.N., Vivo, P.: Invariant
  {$\beta$}-{W}ishart ensembles, crossover densities and asymptotic corrections
  to the {M}ar\v cenko-{P}astur law.
\newblock J. Phys. A \textbf{46}(1), 015,001, 22 (2013).
\newblock \urlprefix\url{https://doi.org/10.1088/1751-8113/46/1/015001}

\bibitem{Baker-Forrester-1997}
Baker, T.H., Forrester, P.J.: The {C}alogero-{S}utherland model and generalized
  classical polynomials.
\newblock Comm. Math. Phys. \textbf{188}(1), 175--216 (1997).
\newblock \doi{10.1007/s002200050161}.
\newblock \urlprefix\url{https://doi.org/10.1007/s002200050161}

\bibitem{Billingsley}
Billingsley, P.: Probability and measure, third edn.
\newblock Wiley Series in Probability and Mathematical Statistics. John Wiley
  \& Sons, Inc., New York (1995).
\newblock A Wiley-Interscience Publication

\bibitem{Bru-1989}
Bru, M.F.: Diffusions of perturbed principal component analysis.
\newblock J. Multivariate Anal. \textbf{29}(1), 127--136 (1989).
\newblock \doi{10.1016/0047-259X(89)90080-8}.
\newblock \urlprefix\url{https://doi.org/10.1016/0047-259X(89)90080-8}

\bibitem{Bru-1991}
Bru, M.F.: Wishart processes.
\newblock J. Theoret. Probab. \textbf{4}(4), 725--751 (1991).
\newblock \doi{10.1007/BF01259552}.
\newblock \urlprefix\url{https://doi.org/10.1007/BF01259552}

\bibitem{Duvillard-Guionnet-2001}
Cabanal~Duvillard, T., Guionnet, A.: Large deviations upper bounds for the laws
  of matrix-valued processes and non-communicative entropies.
\newblock Ann. Probab. \textbf{29}(3), 1205--1261 (2001).
\newblock \doi{10.1214/aop/1015345602}.
\newblock \urlprefix\url{https://doi.org/10.1214/aop/1015345602}

\bibitem{Cepa-Lepingle-1997}
C\'{e}pa, E., L\'{e}pingle, D.: Diffusing particles with electrostatic
  repulsion.
\newblock Probab. Theory Related Fields \textbf{107}(4), 429--449 (1997).
\newblock \doi{10.1007/s004400050092}.
\newblock \urlprefix\url{https://doi.org/10.1007/s004400050092}

\bibitem{Demni-2007}
Demni, N.: The {L}aguerre process and generalized {H}artman-{W}atson law.
\newblock Bernoulli \textbf{13}(2), 556--580 (2007).
\newblock \doi{10.3150/07-BEJ6048}.
\newblock \urlprefix\url{https://doi.org/10.3150/07-BEJ6048}

\bibitem{Demni-2007-arxiv}
Demni, N.: Radial dunkl processes: Existence and uniqueness, hitting time, beta
  processes and random matrices.
\newblock arXiv preprint arXiv:0707.0367  (2007)

\bibitem{DE02}
Dumitriu, I., Edelman, A.: Matrix models for beta ensembles.
\newblock J. Math. Phys. \textbf{43}(11), 5830--5847 (2002).
\newblock \urlprefix\url{http://dx.doi.org/10.1063/1.1507823}

\bibitem{DE06}
Dumitriu, I., Edelman, A.: Global spectrum fluctuations for the
  {$\beta$}-{H}ermite and {$\beta$}-{L}aguerre ensembles via matrix models.
\newblock J. Math. Phys. \textbf{47}(6), 063,302, 36 (2006).
\newblock \urlprefix\url{http://dx.doi.org/10.1063/1.2200144}

\bibitem{Dunkl-1989}
Dunkl, C.F.: Differential-difference operators associated to reflection groups.
\newblock Trans. Amer. Math. Soc. \textbf{311}(1), 167--183 (1989).
\newblock \doi{10.2307/2001022}.
\newblock \urlprefix\url{https://doi.org/10.2307/2001022}

\bibitem{Dunkl-1991}
Dunkl, C.F.: Integral kernels with reflection group invariance.
\newblock Canad. J. Math. \textbf{43}(6), 1213--1227 (1991).
\newblock \doi{10.4153/CJM-1991-069-8}.
\newblock \urlprefix\url{https://doi.org/10.4153/CJM-1991-069-8}

\bibitem{Durrett-book}
Durrett, R.: Probability---theory and examples, \emph{Cambridge Series in
  Statistical and Probabilistic Mathematics}, vol.~49.
\newblock Cambridge University Press, Cambridge (2019).
\newblock \doi{10.1017/9781108591034}.
\newblock \urlprefix\url{https://doi.org/10.1017/9781108591034}.
\newblock Fifth edition of [ MR1068527]

\bibitem{Trinh-ojm-2018}
Duy, T.K.: On spectral measures of random {J}acobi matrices.
\newblock Osaka J. Math. \textbf{55}(4), 595--617 (2018).
\newblock \urlprefix\url{https://projecteuclid.org/euclid.ojm/1539158661}

\bibitem{Trinh-Shirai-2015}
Duy, T.K., Shirai, T.: The mean spectral measures of random {J}acobi matrices
  related to {G}aussian beta ensembles.
\newblock Electron. Commun. Probab. \textbf{20}, no. 68, 13 (2015).
\newblock \urlprefix\url{http://dx.doi.org/10.1214/ECP.v20-4252}

\bibitem{Graczyk-Jacek-2014}
Graczyk, P., Ma\l~ecki, J.: Strong solutions of non-colliding particle systems.
\newblock Electron. J. Probab. \textbf{19}, no. 119, 21 (2014).
\newblock \doi{10.1214/EJP.v19-3842}.
\newblock \urlprefix\url{https://doi.org/10.1214/EJP.v19-3842}

\bibitem{Ismail-et-al-1988}
Ismail, M.E.H., Letessier, J., Valent, G.: Linear birth and death models and
  associated {L}aguerre and {M}eixner polynomials.
\newblock J. Approx. Theory \textbf{55}(3), 337--348 (1988).
\newblock \urlprefix\url{https://doi.org/10.1016/0021-9045(88)90100-1}

\bibitem{Katori-Tanemura-2004}
Katori, M., Tanemura, H.: Symmetry of matrix-valued stochastic processes and
  noncolliding diffusion particle systems.
\newblock J. Math. Phys. \textbf{45}(8), 3058--3085 (2004).
\newblock \doi{10.1063/1.1765215}.
\newblock \urlprefix\url{https://doi.org/10.1063/1.1765215}

\bibitem{Konig-OConnell-2001}
K\"{o}nig, W., O'Connell, N.: Eigenvalues of the {L}aguerre process as
  non-colliding squared {B}essel processes.
\newblock Electron. Comm. Probab. \textbf{6}, 107--114 (2001).
\newblock \doi{10.1214/ECP.v6-1040}.
\newblock \urlprefix\url{https://doi.org/10.1214/ECP.v6-1040}

\bibitem{Martin-Kearney-2010}
Martin, R.J., Kearney, M.J.: An exactly solvable self-convolutive recurrence.
\newblock Aequationes Math. \textbf{80}(3), 291--318 (2010).
\newblock \doi{10.1007/s00010-010-0051-0}.
\newblock \urlprefix\url{https://doi.org/10.1007/s00010-010-0051-0}

\bibitem{Pastur-book}
Pastur, L., Shcherbina, M.: Eigenvalue distribution of large random matrices,
  \emph{Mathematical Surveys and Monographs}, vol. 171.
\newblock American Mathematical Society, Providence, RI (2011).
\newblock \doi{10.1090/surv/171}.
\newblock \urlprefix\url{https://doi.org/10.1090/surv/171}

\bibitem{Rogers-Shi-1993}
Rogers, L.C.G., Shi, Z.: Interacting {B}rownian particles and the {W}igner law.
\newblock Probab. Theory Related Fields \textbf{95}(4), 555--570 (1993).
\newblock \doi{10.1007/BF01196734}.
\newblock \urlprefix\url{https://doi.org/10.1007/BF01196734}

\bibitem{Rosler-1999}
R\"{o}sler, M.: Positivity of {D}unkl's intertwining operator.
\newblock Duke Math. J. \textbf{98}(3), 445--463 (1999).
\newblock \doi{10.1215/S0012-7094-99-09813-7}.
\newblock \urlprefix\url{https://doi.org/10.1215/S0012-7094-99-09813-7}

\bibitem{Rosler-Voit-1998}
R\"{o}sler, M., Voit, M.: Markov processes related with {D}unkl operators.
\newblock Adv. in Appl. Math. \textbf{21}(4), 575--643 (1998).
\newblock \doi{10.1006/aama.1998.0609}.
\newblock \urlprefix\url{https://doi.org/10.1006/aama.1998.0609}

\bibitem{Trinh-Trinh-2021}
Trinh, H.D., Trinh, K.D.: Beta {L}aguerre ensembles in global regime.
\newblock Osaka J. Math.  (2021 (to appear), arXiv preprint arXiv:1907.12267)

\end{thebibliography}
\end{footnotesize}
\end{document}